\theoremstyle{plain}
    \newtheorem{thm}{Theorem}[section]
    \newtheorem{claim}[thm]{Claim}
    \newtheorem{lemma}[thm]{Lemma}
    \newtheorem{proposition}[thm]{Proposition}
    \newtheorem{theorem}[thm]{Theorem}
\theoremstyle{definition}
    \newtheorem{definition}[thm]{Definition}
    \newtheorem{notation}[thm]{Notation}
    \newtheorem*{notation*}{Notation and Terminology}
    \newtheorem{remark}[thm]{Remark}
       \newtheorem{problem}[thm]{Problem}
\theoremstyle{remark}
\newcommand{\Rmnum}[1]{\expandafter\@slowromancap\romannumeral #1@}
\begin{document}

\bibliographystyle{plain}
\title[maximal rank]
{Existence of the equivariant minimal model program for compact K\"ahler threefolds with the action of an abelian group of maximal  rank
}

\author{Guolei Zhong}

\address
{Center for Complex Geometry, Institute for Basic Science (IBS), 55 EXPO-ro,  Yuseong-gu, Daejeon, 34126, Republic of Korea.}
\email{zhongguolei@u.nus.edu, guolei@ibs.re.kr}

\begin{abstract}
Let $X$ be a  $\mathbb{Q}$-factorial compact K\"ahler klt threefold  admitting an action of a free abelian group $G$, which is  of positive entropy and of maximal  rank. 
After running the $G$-equivariant log minimal model program, we show that such $X$ is either rationally connected or  bimeromorphic to a $Q$-complex torus. 
In particular, we fix an issue in the  proof of our previous paper \cite[Theorem 1.3]{Zho22}. 
\end{abstract}
\subjclass[2010]{
14E30,   
14J30, 
14J50. 
}

\keywords{automorphisms, equivariant minimal model program, positive entropy}

\maketitle

\section{Introduction}
We work over the field $\mathbb{C}$ of complex numbers. 
Let $X$ be a compact K\"ahler space with only rational singularities.
Given an automorphism $f\in\textup{Aut}(X)$, the \textit{first dynamical degree} $d_1(f)$ is defined as the spectral radius $\rho(f^*)$ of the induced linear operation $f^*$ on the Bott-Chern cohomology space $H_{\textup{BC}}^{1,1}(X)$ (cf.~\cite[Definition 4.6.2]{BEG13}).  
We say that $f$ is of \textit{positive entropy} if $d_1(f)>1$.  
Note that the definition for $d_1(f)$ here coincides with the usual one when $X$ is smooth (cf.~the arXiv version  \cite[Remark 1.2]{Zho22}). 

A subgroup $G\subseteq\textup{Aut}(X)$ is of \textit{positive entropy}, if every non-trivial element of $G$ is of positive entropy. 
Due to the existence of the $G$-equivariant resolution (cf.~e.g.~\cite[Theorem 2.0.1]{Wlo09}), 
 Dinh and Sibony proved that the rank of a free abelian group $G$, which is of positive entropy, is no more than $\dim X-1$; see \cite[Theorem I]{DS04}. 
We refer readers to \cite[Theorem 1.1]{Zha09} for the  extension to the solvable group case by Zhang so as to prove a theorem of Tits type for automorphism groups of compact K\"ahler manifolds.

As mentioned in \cite{DS04}, it is interesting to consider the extremal case when $G$ achieves the maximal rank $\dim X-1$.
In other words, we would like to describe the geometric property from the dynamical viewpoints.

\begin{problem}[{\cite[Problem 1.5]{Din12}; cf.~\cite[Problem 1.1]{Zho22}}]\label{pro-main}
Classify compact K\"ahler manifolds $X$ of dimension $n\ge 3$ admitting
a free abelian group $G$ of automorphisms of rank $n-1$ which is of positive entropy.
\end{problem}
For projective varieties, Problem \ref{pro-main} has been intensively studied by Zhang in his series of papers (cf.~\cite{Zha09}, \cite{Zha16}). 
In the analytic case, we showed in \cite{Zho22} that, a $\mathbb{Q}$-factorial compact K\"ahler terminal threefold $X$ admitting an action of a free abelian group $G$, which is of positive entropy and of maximal rank, is either rationally connected or  bimeromorphic to a \textit{$Q$-complex torus}, i.e., there is a finite surjective morphism from a complex torus $T\to X$, which is \'etale in codimension one.  
In the proof of that main result \cite[Theorem 1.3]{Zho22} however, we applied the abundance theorem for  K\"ahler threefolds; see \cite[Theorem 1.1]{CHP16}.  
As recently claimed in \cite{CHP21},  the proof of  \cite[Theorem 1.1]{CHP16} for the case when the Kodaira dimension $\kappa(X)=0$ and the algebraic dimension $a(X)=0$ seems incomplete.   

In this note, we will  provide an independent proof of \cite[theorem 1.3]{Zho22} by constructing the $G$-equivariant minimal model program ($G$-MMP for short) for K\"ahler threefolds  so as to fix the resulting issue in our previous paper (cf.~\cite{Zha16} for the $G$-MMP on projective varieties).  
Moreover, thanks to the recent inspiring progress of the minimal model theory on K\"ahler threefolds (\cite{DO22}, \cite{DH22}; cf.~\cite{HP15}, \cite{HP16}, \cite{CHP16} and \cite{CHP21}), we can strengthen \cite[Theorem 1.3]{Zho22} by weakening the condition of $X$ having terminal singularities to that of $X$ having only klt singularities (cf.~Theorem \ref{thm-main}).

We refer readers to \cite[Introduction]{Zho22} and the references therein for more backgrounds. 
In this note, we will focus on a clean proof of the existence of the $G$-MMP on K\"ahler threefolds (cf.~Remark \ref{rmk-difference}). 
Let us begin with the following assumption. 

\setlength\parskip{8pt}\par \vskip 0.2pc 

\noindent
\textbf{\hypertarget{Hypothesis}{Hypothesis}:} Let $(X,D)$ be  a normal $\mathbb{Q}$-factorial compact K\"ahler threefold pair with only klt singularities (where the boundary $D$ is some effective $\mathbb{Q}$-divisor). 
Suppose that there is an action of a free abelian group $G\cong\mathbb{Z}^2\subseteq \textup{Aut}(X)$, which is of positive entropy and of maximal rank. 
Suppose further that every irreducible component of the support of $D$ is $G$-periodic.

The  theorem below is our main result of this note. 
 \setlength\parskip{0pt}
 \par \vskip 0pc \noindent
 
\begin{theorem}\label{thm-main}
Let $(X,D,G)$ be a pair satisfying \hyperlink{Hypothesis}{Hypothesis}. 
If $X$ is not rationally connected, then with $G$ replaced by a finite-index subgroup, the following assertions hold.
\begin{enumerate}
\item There is a $G$-equivariant bimeromorphic map $X\dashrightarrow Z$ (i.e., $G$ descends to a biregular action on $Z$) such that $Z$ is a $Q$-complex torus. 
\item $Z\cong T/H$ where $H$ is  a finite group acting freely outside a finite set of a complex $3$-torus $T$. 
Moreover, the quotient morphism $T\to Z$ is also $G$-equivariant.
\item There is no positive dimensional $G$-periodic proper subvariety of $Z$; in particular, the bimeromorphic map $X\dashrightarrow Z$ in (1) is holomorphic. 
\end{enumerate}
\end{theorem}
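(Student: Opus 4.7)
The plan is to run a $G$-equivariant log minimal model program on $(X,D)$ in the compact K\"ahler klt category and analyze its output. First, I would apply the cone and contraction theorems, the existence of flips, and the termination of flips for $\mathbb{Q}$-factorial klt K\"ahler threefold pairs now available in \cite{HP15,HP16,CHP16,CHP21,DO22,DH22} to carry out an ordinary $(K_X+D)$-MMP. At each stage $G$ permutes the finitely many $(K_X+D)$-negative extremal rays, and the $G$-periodicity of $\textup{Supp}(D)$ is inherited. By iteratively intersecting the finite-index stabilizers of the chosen contracted rays, $G$ can be replaced by a finite-index subgroup under which every divisorial contraction, flip, and Mori fibration is $G$-equivariant. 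The class of $\mathbb{Q}$-factorial K\"ahler klt pairs, as well as the positive entropy and maximal rank of the $G$-action, descend to the output $(X',D')$.

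If the MMP ends with a Mori fibration $X'\to Y$, then $X$ is rationally connected: for $\dim Y=0$, $X'$ is Fano; for $\dim Y=1$, a general fiber $F$ is a del Pezzo surface with no positive-entropy automorphism (since the ample class $-K_F$ is $\textup{Aut}(F)$-invariant), so a finite-index subgroup of $G$ acts trivially on fibers and embeds into $\textup{Aut}(Y)$ of rank at most $0$, contradicting rank $2$; for $\dim Y=2$, general fibers are $\mathbb{P}^1$ contributing no positive entropy, so $G$ embeds into $\textup{Aut}(Y)$ of rank at most $\dim Y-1=1$, again contradicting rank $2$. Hence we may assume $K_{X'}+D'$ is nef. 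Since $K_{X'}+D'$ is $G$-invariant and every non-trivial $g\in G$ is of positive entropy, a Dinh-Sibony type argument on $G$-invariant nef classes for maximal-rank positive-entropy actions (cf.~\cite{DS04,Zha09}) forces $(K_{X'}+D')^3=(K_{X'}+D')^2\cdot\omega=(K_{X'}+D')\cdot\omega^2=0$ for every K\"ahler class $\omega$ on $X'$, whence $K_{X'}+D'\equiv 0$. Effectivity of $D'$ then yields $D'=0$ and $K_{X'}\equiv 0$.

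Next, the $G$-equivariant Albanese map $X'\to\textup{Alb}(X')$ combined with a Beauville-Bogomolov type decomposition for compact K\"ahler klt threefolds with $K\equiv 0$ expresses $X'$ as a quasi-\'etale quotient of a product of a complex torus and a Calabi-Yau threefold (no irreducible holomorphic symplectic factor exists in odd dimension). Since a Calabi-Yau threefold carries no maximal-rank positive-entropy free abelian action, $X'$ is a $Q$-complex torus; set $Z:=X'$, proving (1). Crucially, this step bypasses the unresolved $\kappa=a=0$ case of the K\"ahler abundance theorem, since the $G$-action itself forces $q(X')>0$ and provides enough algebraicity for the Albanese analysis. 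For (2) and (3), after a further finite-index replacement $G$ lifts to act on the covering torus $T$ so that $T\to Z=T/H$ is $G$-equivariant. The linear parts of $G$ form a free abelian rank-$2$ subgroup of $\textup{GL}(H^{1,0}(T))$ consisting of positive-entropy elements whose eigenvectors span the tangent space of $T$; hence $T$ has no positive-dimensional proper $G$-periodic closed subvariety, and neither does $Z=T/H$, proving (3). The exceptional locus of $X\dashrightarrow Z$ is $G$-invariant and would be positive-dimensional if non-empty, so by (3) it is empty and the map is holomorphic. A codimension-two fixed locus of a non-trivial element of $H$ on $T$ would descend to a $G$-periodic curve on $Z$, contradicting (3); hence $H$ acts freely outside a finite set, proving (2).

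The main obstacle I anticipate is the construction of the $G$-equivariant MMP itself, namely, ensuring that each extremal contraction, flip, and termination step is compatible with the $G$-action after finite-index replacement and that the positive entropy and maximal rank descend through each step. A secondary difficulty is establishing $K_{X'}+D'\equiv 0$ and the subsequent Beauville-Bogomolov classification without relying on the unresolved $\kappa=a=0$ case of the K\"ahler abundance theorem; here the idea is to exploit the $G$-action directly to extract the needed algebraic structure rather than invoking abundance.
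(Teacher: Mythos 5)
Your overall architecture (run a $G$-equivariant MMP, reach a model with numerically trivial log canonical class, identify it as a $Q$-complex torus) matches the paper, but there is a genuine gap at the very first and most important step. You propose to run the \emph{unpolarized} $(K_X+D)$-MMP and assert that ``$G$ permutes the finitely many $(K_X+D)$-negative extremal rays.'' For a klt K\"ahler threefold pair the cone theorem only gives countably many such rays, locally discrete away from the hyperplane $(K_X+D)^{\perp}$; there may well be infinitely many (already on a blow-up of $\mathbb{P}^2$ at nine points there are infinitely many $K$-negative extremal rays), in which case the $G$-orbit of the ray you wish to contract can be infinite and no finite-index subgroup stabilizes it. This is precisely the obstruction identified in Remark \ref{rmk-difference}: the paper instead runs the MMP for $K_X+D+\xi$, where $\xi=\xi_1+\xi_2+\xi_3$ is the nef and big sum of the three common nef $G$-eigenvectors of Notation \ref{not-xi}. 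Proposition \ref{pro-nef-big-finite} (whose proof needs the modified-K\"ahler decomposition $\alpha=\eta+F$ of \cite{DH22}, the boundedness of extremal curves from \cite{DO22}, and the compactness Lemma \ref{lem-compact}) shows there are only \emph{finitely} many $(K_X+D+\xi)$-negative extremal rays; since each $\xi_i$ is a $G$-eigenvector, $G$ permutes these finitely many rays and a finite-index subgroup fixes them, and the $\xi_i$ descend along each ($\xi_i$-trivial) contraction by \cite[Lemma 3.3]{HP16}. Without the polarization by $\xi$ your equivariance argument does not get off the ground.

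Two further steps are asserted rather than proved, and in both cases the true argument is substantially harder. First, you claim a ``Dinh--Sibony type argument'' gives $(K_{X'}+D')\cdot\omega^2=0$ for \emph{every} K\"ahler class $\omega$. What Lemma \ref{lem-periodic} actually gives is vanishing against $\xi^2$, and $\xi$ is only nef and big, not K\"ahler; upgrading to a genuine K\"ahler class is the content of Claim \ref{claim-nef-case}, which needs the base-point-free theorem \cite[Theorem 1.7]{DH22} to contract the null locus of $\xi$, a Hodge-index argument showing $\xi$ restricts to zero on every $G$-periodic surface, the decomposition $\sigma'^*\xi=\omega+E$ on a modification, and finally \cite[Theorem 3.1]{DS04} together with pseudo-effectivity of $K_Z+D_Z$ (from non-uniruledness via \cite{Bru06}). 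Second, your Beauville--Bogomolov route to the $Q$-complex torus needs $K_{X'}\sim_{\mathbb{Q}}0$ (not merely $\equiv 0$), i.e.\ abundance for numerically trivial klt classes (\cite[Corollary 1.18]{CGP20}, \cite{DO22}), which you do not invoke, plus a decomposition theorem for singular K\"ahler klt threefolds and an unproved claim that the $G$-action forces $q(X')>0$. The paper avoids all of this by passing to the global index-one cover and applying the Graf--Kirschner criterion \cite[Theorem 1.1]{GK20}, whose hypothesis $\widetilde{c_2}(Z')\cdot\xi_{Z'}=0$ is supplied directly by Lemma \ref{lem-periodic} once $\xi_{Z'}$ is known to be K\"ahler. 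Your arguments for (2) and (3) are essentially sound in spirit, but they too depend on first knowing that $\xi$ descends to a K\"ahler class on $Z$, which again rests on the polarized MMP you have bypassed.
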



\begin{remark}[Differences with the previous paper]\label{rmk-difference}
Comparing the present paper with \cite{Zho22}, we give a few remarks on the differences of the approaches in the proofs.
\begin{enumerate}
\item As mentioned in \cite[Remark 1.6]{Zho22}, we were not able to run the $G$-MMP in \cite{Zho22}. 
One of the difficulty to show the $G$-equivariancy we met was the unknown of the finiteness of $(K_X+\xi)$-negative extremal rays when $\xi$ is a  nef and big (possibly non-rational) class.
Our method therein was to reduce $X$ to its minimal model $X_{\textup{min}}$ with the induced action $G|_{X_{\textup{min}}}$ consisting of bimeomorphic transformations.
Applying the trick of Albanese closure and the speciality of threefolds, we showed that the descended $G|_{X_{\textup{min}}}$ lies in $\textup{Aut}(X_{\textup{min}})$, i.e., the elements in $G|_{X_{\textup{min}}}$ are indeed biholomorphic.
However, the proof relies on the abundance for the case $\kappa(X)=0$. 
\item As kindly pointed out by Doctor Sheng Meng, one can fix the finiteness issue after applying the recent progress on the log minimal model theory for K\"ahler threefolds (cf.~\cite{DO22} and \cite{DH22}); see Proposition \ref{pro-nef-big-finite}. 
Hence, we are able to run a log minimal model program which is $G$-equivariant. 
Further, the full log version of the base point free theorem \cite[Theorem 1.7]{DH22} for K\"ahler threefolds provides us a powerful tool to achieve the end product of the $G$-MMP (cf.~Proposition \ref{prop-end-pdt}) and thus we finally confirm the existence of the $G$-MMP and prove Theorem \ref{thm-main}.  
\item In contrast to the previous proof of \cite[Theorem 1.3]{Zho22}, our present proof in this note does not rely on the incomplete case of the abundance any more.  
It only depends on the abundance for  numerically trivial log-canonical classes, which has been  confirmed in  \cite[Corollary 1.18]{CGP20} and \cite[Theorem 1.1 (3)]{DO22}. 
\end{enumerate}
\end{remark}
We end up this section by pointing out that the $G$-equivariancy of the minimal model program has its own interests, especially in the classification problem. 
The existence of the $G$-MMP indicates the fundamental building blocks of the automorphism groups. 

\subsubsection*{\textbf{\textup{Acknowledgments}}}
The author would like to deeply thank Professor De-Qi Zhang and Doctor Sheng Meng for many inspiring  discussions. 
The author would also like to thank Professor Andreas H\"oring for answering his question on the log minimal model program, and the referee for the very careful reading and suggestions to improve the paper. 
This work was supported by the Institute for Basic Science (IBS-R032-D1-2022-a00). 

\section{Preliminaries}
In this section, we unify some notations and prepare some results for the construction of the $G$-MMP. 
We follow the notations and terminologies in \cite[Section 2]{KM98}, \cite{HP16}, \cite{DH22} and \cite{Zho22} (cf.~\cite[Section 2]{Zho21}).
For the convenience of readers and us, we recall the following definition which will be crucially used in this paper.
\begin{definition}[{\cite[Section 3]{HP16}, \cite[Section 2]{DH22}}]
Let $(X,\omega)$ be a normal compact K\"ahler space, where $\omega$ is a fixed K\"ahler form, and $H_{\textup{BC}}^{1,1}(X)$ the Bott-Chern cohomology of real closed $(1,1)$-forms with local potentials, or closed $(1,1)$-currents with  local potentials.
Let $\textup{N}_1(X)$ be the vector space of real closed currents of bi-dimension $(1,1)$ modulo the equivalence: $T_1\equiv T_2$ if and only if $T_1(\eta)=T_2(\eta)$ for all real closed $(1,1)$-forms $\eta$ with local potentials. 
Let $\overline{\textup{NA}}(X)\subseteq \textup{N}_1(X)$ be the closure of the cone generated by the classes of positive closed currents of bi-dimension $(1,1)$.
\begin{enumerate}
    \item A positive closed $(1,1)$-current $T$ with local potentials is called a \textit{K\"ahler current}, if $T\ge \varepsilon\omega$ for some $\varepsilon>0$.
    A $(1,1)$-class $\alpha\in H_{\textup{BC}}^{1,1}(X)$ is called \textit{big} if it contains a K\"ahler  current.
    \item A $(1,1)$-class $\alpha\in H_{\textup{BC}}^{1,1}(X)$ is called \textit{nef} if it can be represented by a form $\eta$ with local potentials such that for every $\varepsilon$, there exists a $\mathcal{C}^{\infty}$ function $f_\varepsilon$ such that $\eta+dd^cf_\varepsilon\ge-\varepsilon\omega$.
    Denote by $\textup{Nef}(X)$ the cone generated by nef $(1,1)$-classes. 
    \item A $(1,1)$-class  $\alpha\in H_{\textup{BC}}^{1,1}(X)$ is called \textit{pseudo-effective},  if it can be represented by a positive closed $(1,1)$-current $T$ which is locally the form $dd^cf$ for some plurisubharmonic function $f$. 
    \item A  big class $\alpha$ is called \textit{modified K\"ahler} if it contains a K\"ahler current $T$ such that the Lelong number $\nu(T,D)=0$ for every prime divisor $D$ on $X$ (cf.~\cite[Definition 2.2]{Bou04}). 
\end{enumerate}
\end{definition}
\begin{notation}\label{not-xi}
Let $(X,D,G)$ be a  pair satisfying \hyperlink{Hypothesis}{Hypothesis}.
Let $\pi:\widetilde{X}\to X$ be a $G$-equivariant resolution (cf.~e.g.~\cite[Theorem 2.0.1]{Wlo09}).
Applying the proof of \cite[Theorems 4.3 and 4.7]{DS04} to the (lifted) action of $G$ on $\pi^*\textup{Nef}(X)$, there are three nef classes $\pi^*\xi_j$ with $j=1,2,3$ on $\widetilde{X}$ as common eigenvectors of $G$ such that $\xi_1\cdot\xi_2\cdot\xi_3\neq 0$.
To be more precise, there are three characters $\chi_j:G\to\mathbb{R}_{>0}$ such that $g^*(\pi^*\xi_j)=\chi_j(g)(\pi^*\xi_j)$ for each $g\in G$.
Since $X$ is a threefold, each $\xi_j$ is also nef 
(cf.~\cite[Lemma 3.13]{HP16}); hence there exist three nef common eigenvectors of $G$ on $\textup{Nef}(X)$.
Let $\xi:=\xi_1+\xi_2+\xi_3$.
Then $\xi^3>0$ and $\xi$ is thus a nef and big class on $X$ (cf.~\cite[Theorem 0.5]{DP04} and \cite[Proposition 2.6]{Zho21}). 
\end{notation}

The  proposition below plays a significant role in the proof of  Theorem \ref{thm-main} and has its own interests. 
It is well-known in the projective case by Kodaira's lemma and the cone theorem. 
In the analytic case however, the proof needs more arguments. 
\begin{proposition}\label{pro-nef-big-finite}
Let $X$ be a normal $\mathbb{Q}$-factorial compact K\"ahler threefold and $\Delta_0$ an effective $\mathbb{Q}$-divisor such that $(X,\Delta_0)$ has only klt singularities.  
Let $\alpha$ be a nef and big class on $X$. 
Suppose that one of the following conditions holds.
\begin{enumerate}
    \item $K_X+\Delta_0$ is pseudo-effective; or 
    \item $K_X+\Delta_0$ is not pseudo-effective but $K_X+\Delta_0+\alpha$ is a big class. 
\end{enumerate} 
Then there are only finitely many $(K_X+\Delta_0+\alpha)$-negative extremal rays, all of which are generated by rational curves.
\end{proposition}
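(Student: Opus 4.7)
The plan is to rewrite $K_X+\Delta_0+\alpha$ in the form $K_X+\Delta+\omega$ with $(X,\Delta)$ klt and $\omega$ a K\"ahler class, then combine the cone theorem for K\"ahler threefolds from \cite{DH22} with the bigness of $K_X+\Delta_0+\alpha$ to extract both finiteness and rationality of the negative extremal rays.

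First I would observe that $K_X+\Delta_0+\alpha$ is big in both cases: in (1), a pseudo-effective class plus a big class is big; in (2), this is the hypothesis. Next, by a K\"ahler analogue of Kodaira's lemma (cf.~\cite{HP16}), the nef and big class $\alpha$ admits a decomposition $\alpha\equiv\omega+E$ with $\omega$ a K\"ahler class and $E$ an effective $\mathbb{Q}$-divisor. A convexity trick--rewriting this as $\alpha\equiv\bigl(t\alpha+(1-t)\omega\bigr)+(1-t)E$ for $t\in(0,1)$ close to $1$, and using that a nef class plus a K\"ahler class is K\"ahler--lets me take the coefficients of the divisorial part as small as desired. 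Thus I may assume that $(X,\Delta_0+E)$ is klt. Setting $\Delta:=\Delta_0+E$ yields $K_X+\Delta_0+\alpha\equiv K_X+\Delta+\omega$.

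Second, I would invoke the cone theorem for klt K\"ahler threefold pairs established in \cite{DH22} (see also \cite{HP16}) applied to $(X,\Delta,\omega)$: the $(K_X+\Delta+\omega)$-negative extremal rays of $\overline{\textup{NA}}(X)$ are locally discrete in the open half-space $\{(K_X+\Delta+\omega)<0\}$, and each is generated by a rational curve. This handles the rationality conclusion.

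Third, to promote local discreteness to genuine finiteness, I would use bigness of $K_X+\Delta+\omega$ once more: by Kodaira's lemma, write $K_X+\Delta+\omega\equiv\omega'+F$ with $\omega'$ K\"ahler and $F$ an effective $\mathbb{Q}$-divisor. On any $(K_X+\Delta+\omega)$-negative extremal ray $R$ one has $\omega'\cdot R+F\cdot R<0$; since $\omega'\cdot R\ge 0$ (a K\"ahler class pairs nonnegatively with a positive closed current), this forces $F\cdot R<0$, so the rational curve generating $R$ must be contained in $\textup{Supp}(F)$. Because $F$ has only finitely many irreducible components, each a compact analytic surface in $X$, a standard boundedness argument combined with local discreteness yields only finitely many such rays. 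The main obstacle I anticipate lies in the Kodaira-type decomposition $\alpha\equiv\omega+E$ into a \emph{genuine} K\"ahler class plus an effective $\mathbb{Q}$-divisor: the natural divisorial Zariski decomposition of a big class in the K\"ahler setting produces only a modified-K\"ahler positive part, so one needs either a suitable regularization or a small K\"ahler perturbation to extract a true K\"ahler summand. Once this decomposition is in hand, the remaining steps are routine applications of the cone theorem and boundedness on threefolds.
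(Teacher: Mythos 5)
Your plan hinges on the decomposition $\alpha\equiv\omega+E$ with $\omega$ a \emph{genuine} K\"ahler class and $E$ an effective $\mathbb{Q}$-divisor, and this is exactly where the argument breaks: no such decomposition exists in general for a nef and big class on a non-projective compact K\"ahler threefold. Boucksom's divisorial Zariski decomposition only yields $\alpha=\eta+F$ with $\eta$ \emph{modified} K\"ahler, i.e.\ a class whose K\"ahler current can be chosen with zero Lelong numbers along divisors but which may still be singular along the codimension-two part of the non-K\"ahler locus $E_{\textup{nK}}(\alpha)=\textup{Null}(\alpha)$; when that locus has curve components not accounted for by any effective divisor one can subtract, no K\"ahler summand can be extracted on $X$ itself. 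Your convexity trick $\alpha\equiv\bigl(t\alpha+(1-t)\omega\bigr)+(1-t)E$ only shrinks an already-given divisorial part; it cannot create the K\"ahler part. You name this obstacle yourself at the end, but naming it is not filling it, and "a suitable regularization or a small K\"ahler perturbation" is precisely what does not exist on $X$. The paper's proof accepts the weaker conclusion of \cite[Lemma 2.36]{DH22} ($\eta$ only modified K\"ahler), obtains the numerical bound $\eta\cdot\ell_i<d$ from the length estimates of \cite[Theorem 10.12]{DO22} (case (1)) or \cite[Corollary 4.10]{DH22} (case (2)), and then passes to a resolution $\pi:\widetilde{X}\to X$ on which $\pi^*\eta=\widetilde{\eta}+E$ with $\widetilde{\eta}$ genuinely K\"ahler and $E$ $\pi$-exceptional; the finiteness is then extracted from the compactness of $\{c\in\overline{\textup{NA}}(\widetilde{X}): \widetilde{\eta}\cdot c\le d\}$ applied to the proper transforms of the $\ell_i$. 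Some such detour through a modification is unavoidable.

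Two further points. First, your Step 3 ("the generating curve lies in $\textup{Supp}(F)$, hence a standard boundedness argument concludes") is not a proof: local discreteness of negative extremal rays in an open half-space plus containment in finitely many surfaces does not by itself give finiteness; the actual mechanism is a uniform intersection bound against a K\"ahler class together with compactness of the corresponding slice of $\overline{\textup{NA}}$, which is what Lemma \ref{lem-compact} supplies. Second, you apply the cone theorem uniformly in both cases, but the length bound you need behaves differently according to whether $K_X+\Delta_0$ is pseudo-effective; in case (2) the paper must additionally rule out (or separately treat) the situation where $X$ is projective via the MRC fibration before invoking \cite[Corollary 4.10]{DH22}. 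Your proposal does not engage with this case distinction.
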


Before proving the above proposition, we prepare the following lemma, which shares a similar proof as in the projective setting.
\begin{lemma}[{cf.~\cite[Corollary 1.19]{KM98}}]\label{lem-compact}
Let $X$ be a compact K\"ahler space with a fixed K\"ahler class $\omega$.
Suppose that $X$ has at worst rational singularities. 
Then the subset
 $S_M:=\{c\in\overline{\textup{NA}}(X)~|~\omega\cdot c\le M\}$ is  compact for every positive number $M$.
\end{lemma}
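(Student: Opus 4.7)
The plan is to split compactness into closedness (automatic) and boundedness (the substance), with the key point being that the K\"ahler class $\omega$ is strictly positive on $\overline{\textup{NA}}(X)\setminus\{0\}$. First, since $\textup{N}_1(X)$ is a finite-dimensional real vector space (being naturally dual to a quotient of $H^{1,1}_{\textup{BC}}(X)$, which is finite-dimensional on the compact K\"ahler spaces considered here), and $S_M$ is the intersection of the closed cone $\overline{\textup{NA}}(X)$ with a closed half-space cut out by the continuous linear functional $\omega\cdot(-)$, the set $S_M$ is closed in $\textup{N}_1(X)$.

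The main step will be the following claim: if $c\in\overline{\textup{NA}}(X)$ satisfies $\omega\cdot c=0$, then $c=0$. To see this, write $c=\lim_{n\to\infty}[T_n]$ for positive closed currents $T_n$ of bi-dimension $(1,1)$; by continuity of the intersection pairing, $\omega\cdot[T_n]\to 0$. The quantity $\omega\cdot T_n=\int_X T_n\wedge\omega$ is precisely the mass of $T_n$ with respect to the K\"ahler form, and for positive currents vanishing mass forces $T_n\to 0$ weakly (because for any test form $\phi$ one has $|T_n(\phi)|\le C(\phi)\cdot\|T_n\|_\omega$). Hence the classes $[T_n]\to 0$ in $\textup{N}_1(X)$, which gives $c=0$. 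This analytic input is the one genuinely non-formal ingredient of the proof.

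Granting the claim, compactness follows by a standard finite-dimensional argument. Fix any norm $\|\cdot\|$ on $\textup{N}_1(X)$; the slice $\overline{\textup{NA}}(X)\cap\{\|c\|=1\}$ is compact (closed in the unit sphere of a finite-dimensional space), and the continuous function $c\mapsto\omega\cdot c$ attains a strictly positive minimum $\varepsilon>0$ there by the claim. By homogeneity one obtains $\omega\cdot c\ge\varepsilon\|c\|$ for every $c\in\overline{\textup{NA}}(X)$. Therefore $S_M\subseteq\{\|c\|\le M/\varepsilon\}$ is bounded; combined with closedness in the finite-dimensional ambient space, this yields compactness of $S_M$.

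The hardest step is the strict positivity claim, which relies on the analytic fact that a sequence of positive closed currents whose K\"ahler mass tends to zero must converge weakly to zero; once that is in place, the remaining arguments are routine convex-cone reasoning in finite dimension and should present no serious difficulty.
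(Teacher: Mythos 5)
Your proof is correct and follows essentially the same route as the paper: both reduce compactness to the strict positivity of $\omega$ on $\overline{\textup{NA}}(X)\setminus\{0\}$ together with a finite-dimensional convex-cone argument (the paper phrases boundedness as a contradiction via a normalized subsequence, you phrase it as the uniform bound $\omega\cdot c\ge\varepsilon\|c\|$, which is the same idea). The only real difference is that the paper simply cites \cite[Proposition 3.15]{HP16} for the strict positivity, whereas you supply the standard mass-estimate argument for positive closed currents; that argument is sound.
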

\begin{proof}
Let us fix a norm $||\cdot||$ on the finite dimensional vector space $\textup{N}_1(X)$, and assume that $S_M$ is not compact. 
Then there is a sequence $\{c_i\}\subseteq \overline{\textup{NA}}(X)$ such that $||c_i||\to\infty$ as $i\to\infty$. 
However, the normalized set $\{\frac{c_i}{||c_i||}\}$ is bounded; thus one may pick a subsequence $\{\frac{c_{i_k}}{||c_{i_k}||}\}$ which converges to a non-zero point $c_0\in\overline{\textup{NA}}(X)$ as $k\to\infty$.
So  $$(\omega\cdot c_0)=\lim_{k\to\infty}\frac{(\omega\cdot c_{i_k})}{||c_{i_k}||}=0,$$ 
 a contradiction to $\omega$ being a K\"ahler class (cf.~\cite[Proposition 3.15]{HP16}). 
\end{proof}

\begin{proof}[Proof of Proposition \ref{pro-nef-big-finite}]
Since $\alpha$ is  nef and big, by \cite[Lemma 2.36]{DH22}, there exist a modified K\"ahler class $\eta$ and an effective $\mathbb{Q}$-divisor $F$ such that $\alpha=\eta+F$ and $(X,\Delta:=\Delta_0+F)$ is klt.
We will show the finiteness of $(K_X+\Delta+\eta)$-negative extremal rays.

First, we assume (1), i.e., $K_X+\Delta_0$ is pseudo-effective. 
Then every $(K_X+\Delta_0+\alpha)$-negative extremal ray $R$ is $(K_X+\Delta_0)$-negative, and hence generated by a rational curve $R=\mathbb{R}_{\ge 0}[\ell]$  (see \cite[Theorem 10.12]{DO22} and  \cite[Theorem 4.2]{CHP16}).
Since $(X,\Delta)$ is also klt with $K_X+\Delta$ being pseudo-effective, 
by \cite[Theorem 10.12]{DO22}, there is a positive number $d$ such that every $(K_X+\Delta+\eta)$-negative extremal ray $R_i$ with $R_i=\mathbb{R}_{\ge}[\ell_i]$ satisfies $-(K_X+\Delta)\cdot \ell_i\le d$ (note that here, we include the case $(K_X+\Delta)\cdot \ell_i\ge 0$). 
Therefore,  $\eta\cdot \ell_i<-(K_X+\Delta)\cdot\ell_i\le d$.

We shall prove that, there are only finitely many such curve class with $\eta\cdot [\ell_i]<d$.
Indeed, since $\eta$ is modified K\"ahler, applying \cite[Proposition 2.3]{Bou04} and \cite[Page 990, Footnote  (5)]{CHP16} to $\eta$ (on the singular space $X$), we  have a suitable resolution $\pi:\widetilde{X}\to X$ such that $\pi^*\eta=\widetilde{\eta}+E$ where $\widetilde{\eta}$ is a K\"ahler class and $E$ is an effective $\pi$-exceptional $\mathbb{R}$-divisor on $\widetilde{X}$. 
Suppose to the contrary that there are infinitely many curve classes $[\ell_i]$ with $\eta\cdot [\ell_i]<d$.
Since $\pi(E)$ is the union of finitely many curves and points, we may assume that all of such curves $\ell_i$ are not contained in $\pi(E)$.
Let $\widetilde{\ell}_i$ be the proper transform of $\ell_i$ along $\pi$.
Now that different $\ell_i$ and $\ell_j$ lie in different numerical classes, their proper transforms $\widetilde{\ell}_i$ and $\widetilde{\ell}_j$ also lie in different numerical classes.
Moreover, $\widetilde{\eta}\cdot\widetilde{\ell}_i=\pi^*\eta\cdot\widetilde{\ell}_i-E\cdot\widetilde{\ell}_i\le\eta\cdot\ell_i\le d$.
So there are infinitely many curve classes $[\widetilde{\ell}_i]$ on $\widetilde{X}$ such that $\widetilde{\eta}\cdot\widetilde{\ell}_i\le d$.
However, by Lemma \ref{lem-compact}, $S_d$ is compact;  hence such curve classes (as a closed discrete set) in $S_d$ are finitely many, which produces a contradiction.
Therefore, if $K_X+\Delta_0$ is pseudo-effective, there are only finitely many $(K_X+\Delta+\eta)=(K_X+\Delta_0+\alpha)$-negative extremal rays.

Second, we assume (2), i.e., $K_X+\Delta_0$ is not pseudo-effective. 
Then $X$ is uniruled (cf.~\cite{Bru06}). 
Also, we may assume that the object of the MRC fibration of $X$ is a non-uniruled surface;  otherwise, $X$ is projective (cf.~\cite[Introduction]{HP15} or \cite[Lemma 2.39]{DH22}) and our proposition then follows from the usual Kodaira's lemma and the cone theorem. 
Since $K_X+\Delta+\eta$ is  big by assumption, for a general fibre $F$ of the MRC fibration,  $(K_X+\Delta+\eta)\cdot F>0$.
With the same argument as in the above paragraph after replacing \cite[Theorem 10.12]{DO22} by \cite[Corollary 4.10]{DH22}, we finish the proof.
\end{proof}

Given a nef and big class $\alpha$ on a normal compact K\"ahler space, we define the \textit{null locus} $\textup{Null}(\alpha)$  as the union of the proper subvarieties $V$ such that $\alpha|_V$ is not big (cf.~\cite{CT15}). 
\begin{lemma}[{cf.~\cite[Lemma 3.9]{Zha16}}]\label{lem-null-periodic}
With the same assumption and notations as in \hyperlink{Hypothesis}{Hypothesis} and Notation \ref{not-xi},  
the null locus of the nef and big class $\xi$ 
$$\textup{Null}(\xi):=\bigcup_{\xi|_V~\textup{not big}}V=\bigcup_{V~\textup{is}~ G\textup{-periodic}}V.$$
In particular, there are only finitely many $G$-periodic proper analytic subvarieties. 
\end{lemma}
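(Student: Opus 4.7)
The plan is to prove the two inclusions of the claimed equality separately, and then deduce the finiteness assertion from the analytic structure of $\mathrm{Null}(\xi)$.

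For the inclusion $\mathrm{Null}(\xi) \subseteq \bigcup_{V\ G\text{-periodic}} V$, I would first show that $\mathrm{Null}(\xi)$ is $G$-invariant. Let $V$ be an irreducible subvariety with $\xi|_V$ not big; since $\xi|_V$ is nef, this is equivalent to $(\xi|_V)^{\dim V} = 0$. Expanding $\xi = \xi_1+\xi_2+\xi_3$ into nef summands, every mixed top-intersection $(\xi_{j_1}|_V)\cdots(\xi_{j_{\dim V}}|_V)$ is a nonnegative real number whose sum equals zero, so each vanishes individually. For any $g \in G$, the class $g^*\xi = \sum_j \chi_j(g)\xi_j$ has positive coefficients, so $((g^*\xi)|_V)^{\dim V} = 0$ as well; via the isomorphism $g|_V : V \xrightarrow{\sim} g(V)$, this says $\xi|_{g(V)}$ is not big, i.e.\ $g(V) \subseteq \mathrm{Null}(\xi)$. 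The Collins--Tosatti description of the null locus \cite{CT15}, extended to the K\"ahler setting, then implies that $\mathrm{Null}(\xi)$ is a proper analytic subset with finitely many irreducible components; these components are permuted by $G$, so each is stabilized by a finite-index subgroup of $G$, and hence is $G$-periodic.

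For the reverse inclusion, suppose $V$ is an irreducible $G$-periodic proper subvariety, stabilized (setwise) by a finite-index subgroup $G_1 \subseteq G$. Pass to a $G_1$-equivariant resolution $\widetilde V \to V$, and assume for contradiction that $\xi|_{\widetilde V}$ is nef and big. By the Dinh--Sibony bound applied to $\widetilde V$, the positive-entropy rank of the image of $G_1$ in $\mathrm{Aut}(\widetilde V)$ is at most $\dim \widetilde V - 1 \le 1$, so the zero-entropy kernel $K \subseteq G_1$ has rank at least $1$. For $g \in K$, the identities $d_1(g|_{\widetilde V}) = d_1(g^{-1}|_{\widetilde V}) = 1$ force every eigenvalue of $g^*$ on $H^{1,1}(\widetilde V)$ to have modulus one; combined with $\chi_j(g) > 0$, this gives $\chi_j(g) = 1$ whenever $\xi_j|_{\widetilde V} \ne 0$. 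A case analysis on $\dim V \in \{1,2\}$, using the Hodge index theorem on the surface case (or a direct intersection computation on a curve) together with the multiplicative relation $\chi_1\chi_2\chi_3 = 1$ (which follows from the preservation of the top class $\xi_1\xi_2\xi_3$ on $X$ by $g^*$), then shows that bigness of $\xi|_{\widetilde V}$ forces sufficiently many $\xi_j|_{\widetilde V}$ to be non-zero that the three characters $\chi_j$ are collectively trivial on a rank-one subgroup. This drops the rank of the character image of $G$ to at most $1$, contradicting the maximality of $G$'s rank $2 = \dim X - 1$.

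The finiteness of $G$-periodic proper analytic subvarieties then follows from the equality: each such subvariety lies in one of the finitely many irreducible components of $\mathrm{Null}(\xi)$, and within a component one may iterate the same rank-contradiction argument to rule out infinitely many $G$-periodic sub-subvarieties. The main obstacle is the case analysis in the second paragraph, where one must carefully track which of $\xi_j|_{\widetilde V}$ can vanish and leverage the relation $\chi_1\chi_2\chi_3 = 1$ to guarantee that the surviving characters cannot all be trivial on a subgroup of rank exceeding $\dim V - 1$; without this multiplicative constraint the rank collapse of $G$ would not be forced.
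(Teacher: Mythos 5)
Your first paragraph (the inclusion $\textup{Null}(\xi)\subseteq\bigcup_{V\ G\text{-periodic}}V$) is correct and coincides with the paper's argument: $(\xi|_V)^{\dim V}=0$ forces every mixed product $\xi_{i_1}\cdots\xi_{i_d}\cdot V$ to vanish by nefness, hence $g(V)\subseteq\textup{Null}(\xi)$ for all $g\in G$, and Collins--Tosatti (applied on a resolution) gives finitely many irreducible components, each of which is therefore $G$-periodic.

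The reverse inclusion is where there is a genuine gap. You route it through a $G_1$-equivariant resolution $\widetilde V$, the Dinh--Sibony rank bound on $\widetilde V$, and the claim that the zero-entropy kernel $K$ (of rank $\ge 1$) satisfies $\chi_j|_K=1$ for every $j$ with $\xi_j|_{\widetilde V}\ne 0$, from which you want the three characters to be ``collectively trivial on a rank-one subgroup.'' That mechanism only succeeds when at least two of the restrictions $\xi_j|_{\widetilde V}$ are nonzero. In the subcase where exactly one of them, say $\xi_1|_{\widetilde V}$, is nonzero (so bigness of $\xi|_{\widetilde V}$ for $\dim V=2$ means $(\xi_1|_{\widetilde V})^2>0$), the kernel argument yields only $\chi_1|_K=1$, and together with $\chi_1\chi_2\chi_3=1$ the image of $K$ under $(\log\chi_1,\log\chi_2,\log\chi_3)$ may still be a rank-one lattice inside the line $\{x_1=0,\ x_2+x_3=0\}$; no rank collapse follows. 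To close this subcase you must instead observe that $\xi_1^2\cdot V$ is a positive number that each $g\in G_1$ simultaneously preserves (projection formula, using $g^*V=V$) and multiplies by $\chi_1(g)^2$, forcing $\chi_1\equiv 1$ on all of $G_1$. But that observation, applied to $\xi_{i_1}\cdots\xi_{i_d}\cdot V$ for every choice of indices, is already the paper's entire proof of this inclusion: since the log-character image of $G$ is a rank-two spanning lattice (Dinh--Sibony), one may choose $g$ with $\chi_{i_1}(g)\cdots\chi_{i_d}(g)>1$, whence every such intersection number vanishes and $\xi^{\dim V}\cdot V=0$. So the detour through equivariant resolutions and zero-entropy kernels is both insufficient as stated and superfluous once the projection-formula step is supplied. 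Two smaller points: your final contradiction implicitly requires that the character image of $G$ has rank $2$ (the spanning-lattice statement, which should be cited), and the concluding finiteness of $G$-periodic subvarieties is handled only by ``iterate the same argument,'' which is not spelled out.
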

\begin{proof}
For every closed subvariety $V$ on $X$, the restriction $\xi|_V$ is still nef.  (cf.~e.g.~\cite[Proposition 2.6]{Zho21}). 
Hence, $\xi|_V$ being not big is equivalent to $0=(\xi|_V)^{\dim V}=\xi^{\dim V}\cdot V$ (cf.~\cite[Theorem 0.5]{DP04} and \cite[Proposition 2.6]{Zho21}).
Applying a theorem of Collins and Tosatti \cite{CT15} to the pull-back of the class $\xi$ to a resolution of $X$, $\textup{Null}(\xi)$ has only finitely many irreducible components.

First, let $V$ (with $d:=\dim V$) be a proper $G$-periodic subvariety.
With $G$ replaced by a finite-index subgroup, we may assume $g^*V=V$ for any $g\in G$. 
On the one hand, by the projection formula, $\chi_1\cdot\chi_2\cdot\chi_3=1$.
On the other hand, for any $j_1,j_2$, 
 the image of $\varphi:G\to\mathbb{R}^2$ sending $g$ to $(\log\chi_{j_1}(g),\log\chi_{j_2}(g))$ is a spanning lattice of $(\mathbb{R}^2,+)$ in the sense that the image $\textup{Im}\varphi$ is discrete in the additive group $(\mathbb{R}^2,+)$ and hence $\textup{Im}\varphi\cong\mathbb{Z}^2$ (cf.~\cite[Section 4]{DS04}). 
As a result, for any $i_1,\cdots,i_d$ (with $d\le 2$), we can choose $g\in G$ such that $\chi_{i_k}(g)>1$ for every $i_k$.
By the projection formula, we have 
$$\chi_{i_1}(g)\cdots\chi_{i_d}(g)(\xi_{i_1}\cdots\xi_{i_d}\cdot V)=g^*\xi_{i_1}\cdots g^*\xi_{i_d}\cdot g^*V=\xi_{i_1}\cdots\xi_{i_d}\cdot V.$$
As each $\chi_{i_k}(g)>1$ by the choice of $g$, we have $\xi_{i_1}\cdots\xi_{i_d}\cdot V=0$ for any $i_1,\cdots,i_d$ (with $d\le 2$). 
Since $\xi=\sum\xi_i$ and $\dim (V)\ge 1$, we have $\xi^d\cdot V=0$. 
One inclusion is verified.

Second,  let $V$ (with $d:=\dim V$) be an irreducible closed subvariety such that $\xi^{d}\cdot V=0$.  
By the nefness of $\xi_i$, we have $\xi_{i_1}\cdots\xi_{i_d}\cdot V=0$ for all $i_k$.  
With the same reason as above, $\xi_{i_1}\cdots\xi_{i_d}\cdot g(V)=0$ and then  $\xi^{d}\cdot g(V)=0$ for any $g\in G$.
So $g(V)\subseteq\textup{Null}(\xi)$ and  there is a natural inclusion
$V\subseteq\overline{\cup_{g\in G}g(V)}\subseteq \textup{Null}(\xi)$. 
Since the closure $\overline{\cup_{g\in G}g(V)}$ is $G$-stable and contained in $\textup{Null}(\xi)$ which is a union of finitely many proper subvarieties, 
$\textup{Null}(\xi)$ is contained in the right hand side.
\end{proof}

In what follows, we  recall the following lemma from previous papers.

\begin{lemma}[{\cite[Lemma 2.7]{Zho22}; cf.~\cite[Lemma 3.7]{Zha16}}]\label{lem-periodic}
With the same assumption and notations as in \hyperlink{Hypothesis}{Hypothesis} and Notation \ref{not-xi}, 
for every $G$-periodic $(k,k)$-class $\eta$ with $k=1$ or $2$, $\xi^{3-k}\cdot \eta=0$; in particular, $\xi^2\cdot c_1(X)=\xi\cdot c_1(X)^2=0$ and $\xi\cdot \widetilde{c_2}(X)=0$ (where $\widetilde{c_2}(X)$ is the orbifold second Chern class as defined in \cite[Section 5]{GK20}). 
\end{lemma}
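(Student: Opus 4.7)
My plan is to reduce the statement to a monomial intersection calculation via the common-eigenvector structure of the $\xi_j$, and then to observe that automorphisms preserve top-degree intersection numbers. After replacing $G$ by a finite-index subgroup, I may assume $g^{*}\eta = \eta$ for every $g\in G$. Expanding
$$\xi^{3-k} \;=\; (\xi_1+\xi_2+\xi_3)^{3-k},$$
it suffices to show that each monomial intersection number $\xi_{i_1}\cdots\xi_{i_{3-k}}\cdot\eta$ vanishes, for every choice of indices $i_1,\dots,i_{3-k}\in\{1,2,3\}$.

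The key step is the following eigenvalue identity. Since $g$ is a biholomorphism, $g^{*}$ preserves the top-degree intersection pairing on $X$; combining this with $g^{*}\xi_j=\chi_j(g)\xi_j$ and $g^{*}\eta=\eta$ gives
$$\xi_{i_1}\cdots\xi_{i_{3-k}}\cdot\eta \;=\; \Bigl(\prod_{\ell=1}^{3-k}\chi_{i_\ell}(g)\Bigr)\,\xi_{i_1}\cdots\xi_{i_{3-k}}\cdot\eta$$
for every $g\in G$. Thus the vanishing of the monomial is automatic as soon as the character $\prod_\ell \chi_{i_\ell}$ is non-trivial on $G$. This reduces the whole lemma to verifying that each partial product of length $1$ or $2$ among $\chi_1,\chi_2,\chi_3$ is non-trivial.

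By the projection formula, $\chi_1\chi_2\chi_3\equiv 1$. Furthermore, the logarithmic character map $\log\chi\colon G\to\mathbb{R}^3$, $g\mapsto(\log\chi_1(g),\log\chi_2(g),\log\chi_3(g))$, has image a rank-$2$ lattice contained in the plane $x+y+z=0$, as furnished by Dinh--Sibony (cf.~the input to Notation \ref{not-xi}). From this one reads off directly that each $\chi_i$, each $\chi_i^2$, and each $\chi_i\chi_j=\chi_k^{-1}$ (with $\{i,j,k\}=\{1,2,3\}$) is non-trivial; hence for every index string of length $1$ or $2$ some $g\in G$ realises the corresponding product character with value $\ne 1$. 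This is essentially the only non-routine point of the argument; in particular the maximal-rank assumption on $G$ is used precisely here, since without it some partial product could be identically $1$.

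For the three specific corollaries, the classes $c_1(X)=-K_X$, $c_1(X)^2$, and the orbifold second Chern class $\widetilde{c_2}(X)$ are all canonically attached to the complex space $X$, so they are strictly $G$-invariant under the biregular action and thus $G$-periodic classes of types $(1,1)$, $(2,2)$, $(2,2)$ respectively. Applying the main identity then yields $\xi^{2}\cdot c_1(X)=0$, $\xi\cdot c_1(X)^2=0$, and $\xi\cdot\widetilde{c_2}(X)=0$.
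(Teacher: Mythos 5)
Your proof is correct and takes essentially the approach the paper relies on: the paper does not reprove this lemma (it is quoted from \cite[Lemma 2.7]{Zho22}), but the identical mechanism --- $\chi_1\chi_2\chi_3=1$ from the projection formula together with the spanning-lattice property of $g\mapsto(\log\chi_{j_1}(g),\log\chi_{j_2}(g))$ to produce some $g$ on which each length-$1$ or length-$2$ product character is $\neq 1$ --- is exactly what is deployed in the proof of Lemma \ref{lem-null-periodic}. No gaps.
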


To close this section, we prove the lemma below which is known to experts as an analytic version of Kodaira's lemma.
It will be used in the next section. 
\begin{lemma}\label{lem-reso-null-locus}
Let $X$ be a compact K\"ahler manifold and $\eta$ a nef and big $(1,1)$-class on $X$.
Then there is a bimeromorphic holomorphic map $f:X'\to X$ from another compact K\"ahler manifold $X'$ such that $f^*\eta=\omega+E$ where $\omega$ is some K\"ahler class, $E$ is an effective $\mathbb{R}$-divisor, and the support of its $f$-image $\textup{Supp}\,f(E)$ coincides with the null locus of $\eta$.
\end{lemma}
\begin{proof}
By \cite[Theorem 3.17 (ii)]{Bou04},
there is a K\"ahler current $T\in \eta$ with analytic singularities precisely along
the non-K\"ahler locus, i.e.,  $\textup{Sing}\,~T=E_{\textup{nK}}(\eta)$ (cf.~ \cite{Dem92} and \cite[Theorem 2.2]{CT15}). 
Note also that $E_{\textup{nK}}(\eta)$ coincides with 
 $\textup{Null}(\eta)$ (cf.~\cite[Theorem 1.1]{CT15}). 
After blowing up the associated coherent ideal sheaf (with its support along $E_{\textup{nK}}(\eta)$) and taking  Hironaka's
resolution of singularities (cf.~e.g.~\cite[Definition 2.5.1 and Proof of Proposition 2.3]{Bou04}), 
we get a bimeromorphic holomorphic map $f:X'\to X$ such that  $f^*\eta=\omega+E$ where $\omega$ is some K\"ahler class and $E$ is an effective $\mathbb{R}$-divisor with its $f$-image having the support along $\textup{Supp}\,f(E)=\textup{Null}(\eta)$. 
Our lemma is thus proved.
\end{proof}

\section{Equivariancy of the MMP, Proof of Theorem \ref{thm-main}}
In this section, we establish the $G$-equivariant minimal model program for K\"ahler threefolds and prove Theorem \ref{thm-main}.   
Throughout this section, we stick to \hyperlink{Hypothesis}{Hypothesis} and Notation \ref{not-xi}. 
Roughly speaking, when $X$ is non-uniruled,  the $G$-MMP  is to contract the null locus of $\xi$ so as to get a K\"ahler class with good properties (cf.~\cite{Zha16}). 

Let us begin with the proposition below, which reveals the end product of such $G$-MMP. 
In contrast to the projective setting,  the nef and big $(1,1)$-class $\xi$ here is possibly not  an $\mathbb{R}$-divisor.
As a consequence, the proof of the $G$-equivariancy of the fibration induced by the base point free theorem \cite[Theorem 1.7]{DH22} does not follow immediately from Kodaira's lemma and more arguments are needed (cf.~Claim \ref{claim-equivariancy}).

\begin{proposition}\label{prop-end-pdt}
Theorem \ref{thm-main} holds true if $K_X+D+\xi$ is already nef.
\end{proposition}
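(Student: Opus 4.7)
The plan combines intersection-theoretic input from the $G$-action with abundance and structure theorems for klt K\"ahler threefolds. First, since $G$ acts biregularly on $X$ and every irreducible component of $\operatorname{Supp}(D)$ is $G$-periodic, after replacing $G$ by a finite-index subgroup the Bott--Chern class $K_X+D$ is $G$-invariant. Applying Lemma~\ref{lem-periodic} to the $G$-invariant $(1,1)$-class $K_X+D$ and the $G$-invariant $(2,2)$-class $(K_X+D)^2$ yields $\xi^{2}\cdot(K_X+D)=0$ and $\xi\cdot(K_X+D)^{2}=0$, so that $(K_X+D+\xi)^3=\xi^3+(K_X+D)^3$.

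Second, I would upgrade these vanishings to $K_X+D\equiv 0$ via a Hodge-index argument: lifting to the $G$-equivariant resolution $\pi:\widetilde{X}\to X$ preserves both vanishings by the projection formula and keeps $\pi^*\xi$ nef and big on the smooth compact K\"ahler threefold $\widetilde{X}$; the Hodge--Riemann bilinear relations, extended to nef and big classes by approximation with K\"ahler classes, then force $\pi^*(K_X+D)\equiv 0$. Pushing forward gives $K_X+D\equiv 0$ on $X$, and the abundance theorem for numerically trivial log-canonical classes (\cite[Corollary~1.18]{CGP20}, \cite[Theorem~1.1(3)]{DO22}) promotes this to $m(K_X+D)\sim 0$ for some positive integer $m$.

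Third, with $K_X+D\sim_{\mathbb{Q}} 0$ in hand, I would invoke the Beauville--Bogomolov-type decomposition for klt compact K\"ahler threefolds with torsion log-canonical class: after an index-one cover absorbing the boundary, a further quasi-\'etale cover of $X$ splits into a product of a complex torus, a simply connected Calabi--Yau, and an irreducible hyperk\"ahler factor. Since no Calabi--Yau or hyperk\"ahler variety of dimension $\le 3$ supports a $\mathbb{Z}^2$-action of positive entropy of maximal rank, the cover must be a complex $3$-torus $T$, exhibiting $X$ as $Z=T/H$ and yielding assertions (1) and (2). The $G$-equivariance of each canonical construction (resolution, index-one cover, Albanese, descent) is automatic, and Lemma~\ref{lem-null-periodic} applied to the image of $\xi$ on $Z$ forces the null locus — i.e., the union of positive-dimensional $G$-periodic subvarieties — to be empty, so the bimeromorphic map $X\dashrightarrow Z$ is holomorphic and assertion (3) follows. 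The principal obstacle I anticipate is the rigorous execution of the Hodge-index step on the (singular) Bott--Chern cohomology via the resolution, together with the careful invocation of the Beauville--Bogomolov decomposition at the pair level in the K\"ahler threefold setting and the exclusion of Calabi--Yau and hyperk\"ahler building blocks by entropy considerations.
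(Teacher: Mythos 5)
Your Step 2 contains the essential gap. From $\xi^{2}\cdot(K_X+D)=0$ and $\xi\cdot(K_X+D)^{2}=0$ you cannot conclude $K_X+D\equiv 0$, because $\xi$ is only nef and big, not K\"ahler: the quadratic form $\gamma\mapsto \pi^*\xi\cdot\gamma^{2}$ obtained by approximating $\pi^*\xi$ with K\"ahler classes is in the limit only negative \emph{semi}-definite on the primitive part, and it degenerates precisely on classes supported on $\textup{Null}(\xi)$. (Concretely: on the blow-up of a threefold at a point with exceptional divisor $E$ and $\xi$ the pull-back of a K\"ahler class, one has $\xi^{2}\cdot E=\xi\cdot E^{2}=0$ while $E\not\equiv 0$.) A priori $K_X+D$ could be numerically a nonzero combination of $G$-periodic divisors lying in $\textup{Null}(\xi)$, and both vanishings would still hold. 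This is exactly why the paper does \emph{not} argue on $X$: it first uses the base point free theorem \cite[Theorem 1.7]{DH22} applied to the nef and big class $K_X+D+\xi$ to contract $\textup{Null}(\xi)$ (which by Lemma \ref{lem-null-periodic} is the union of the positive-dimensional $G$-periodic subvarieties) onto $Z$, proves the $G$-equivariance of this contraction by a delicate $G$-equivariant run of a $(K_X+D+F)$-MMP (Claim \ref{claim-equivariancy}, using Proposition \ref{pro-nef-big-finite} and the decomposition $\xi=\eta+F$), and only then establishes $K_Z+D_Z\equiv 0$ via a decomposition $\sigma'^{*}\xi_{\widetilde Z}=\omega+E$ with $\omega$ a genuine K\"ahler class, the restriction $\xi_{\widetilde Z}|_B\equiv 0$ on $G$-periodic surfaces, the pseudo-effectivity of $K_Z$, and \cite[Theorem 3.1]{DS04} tested against $\omega$. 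Your proposal skips the construction of $Z$ and the equivariance of $\psi$ entirely, implicitly identifying $X$ with $Z$; but $\psi$ can be a nontrivial contraction, and producing it $G$-equivariantly is the technical heart of the proposition.

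Separately, your Step 3 replaces the paper's route --- index-one cover, $\widetilde{c_2}(Z')\cdot\xi_{Z'}=0$ from Lemma \ref{lem-periodic}, and the Graf--Kirschner criterion \cite[Theorem 1.1]{GK20} --- by a Beauville--Bogomolov-type decomposition for klt compact K\"ahler threefold pairs with torsion log-canonical class. Even setting aside whether that decomposition is available in the required generality, note that the Graf--Kirschner input needs a genuine K\"ahler class with $\widetilde{c_2}\cdot\xi=0$, which again only exists after contracting the null locus; and the exclusion of Calabi--Yau factors ``by entropy considerations'' is itself a nontrivial theorem, not a remark. So both the numerical-triviality step and the torus-quotient step require the passage to $Z$ that your argument omits.
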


\begin{proof}[Proof of Proposition \ref{prop-end-pdt}]
In the view of \cite[Theorem 1.3 (1) and its proof]{Zho22}, we only need to consider the case when $X$ is not uniruled (cf.~\cite[Lemma 2.10]{Zha09}).
With $\xi$ replaced by $2\xi$ if necessary, we may assume that $K_X+D+\xi$ is also big. 

Since $K_X+D+\xi$ is nef, 
by the base point free theorem \cite[Theorem 1.7]{DH22},  there exist a proper surjective morphism with connected fibres $\psi:X\to Z$ onto a normal K\"ahler variety $Z$ and a K\"ahler class $\beta$ on $Z$ such that $K_X+D+\xi=\psi^*\beta$. 
By the bigness of $K_X+D+\xi$, $\psi$ is bimeromorphic.

\begin{claim}\label{claim-equivariancy}
$\psi$ is $G$-equivariant.
\end{claim}

\noindent
\textbf{Proof of Claim \ref{claim-equivariancy}:}
Let $R$ be a $(K_X+D+\xi)$-trivial extremal ray. 
If $R$ is $\xi$-positive, then taking a positive number $\varepsilon<1$, one has
$$(K_X+D+(1-\varepsilon)\xi)\cdot R<(K_X+D+\xi)\cdot R=0.$$
By Proposition \ref{pro-nef-big-finite},  there are only finitely many such $(K_X+D+\xi)$-trivial but $\xi$-positive extremal rays.
Therefore, after replacing $\xi$ by a  multiple, we may assume that  every $(K_X+D+\xi)$-trivial extremal ray  is $\xi$-trivial.  
By \cite[Theorem 1.3]{CHP16},  every $(K_X+D+\xi)$-trivial curve is thus $\xi$-trivial. 
We shall follow  \cite[Proof of Theorem 6.4]{DH22} to establish the $G$-equivariancy of $\psi$.  

By \cite[Lemma 2.36]{DH22}, there is a decomposition  $\xi=\eta+F$ such that $\eta$ is modified K\"ahler and $(X,D+F)$ is still klt.
Let us first prove that the support of $F$ is $G$-periodic so that the pair $(X,D+F,G)$ still satisfies  \hyperlink{Hypothesis}{Hypothesis}.  
Let $\pi:\widetilde{X}\to X$ be a $G$-equivariant resolution with $\pi^*\xi$ nef and big. 
Applying Lemma \ref{lem-reso-null-locus}, 
we get a (not necessarily $G$-equivariant) bimeromorphic holomorphic map $\pi':\widetilde{X}'\to \widetilde{X}$ such that  $\pi'^*\pi^*\xi=\omega+E$ where $\omega$ is some K\"ahler class and $E$ is an effective $\mathbb{R}$-divisor with its $\pi'$-image having the support along $\textup{Supp}\,\pi'(E)=\textup{Null}(\pi^*\xi)$. 
By Lemma \ref{lem-null-periodic}, the support of $F$ (as the divisorial component of $\pi(\pi'(E))$; cf.~\cite[Lemma 2.36]{DH22}) is $G$-periodic. 
In the following, we can run $(K_X+D+\xi)$-trivial but $(K_X+D+F)$-negative minimal model program $G$-equivariantly. 

\setlength\parskip{3pt}\par \vskip 0.3pc 
\noindent
\textbf{Step 1.} First, we deal with the case when every $(K_X+D+\xi)$-trivial curve is $(K_X+D+F)$-non-negative. 
Then the null locus $\textup{Null}(K_X+D+\xi)$ is the union of finitely many curves (cf.~\cite[Paragraph 4, Page 48, Proof of Theorem 6.4]{DH22}). 
By \cite[Proposition 6.2]{DH22} (cf.~\cite[Theorem 4.2]{CHP16}), there is a proper bimeromorphic holomorphic map $\psi_0:X\to Z$ contracting  $\textup{Null}(K_X+D+\xi)$ to a single point.
Moreover, $K_X+D+\xi=\psi_0^*\beta$ for some K\"ahler class $\beta$ on $Z$ (cf.~\cite[Ending part of Proof of Theorem 6.4]{DH22}). 
For every curve $\ell$  contracted by $\psi_0$, it follows from the projection formula that $\xi_i\cdot g(\ell)=0$, noting that each $\xi_i$ is nef and $\ell$ is also $\xi$-trivial.
Then $0=(K_X+D+\xi)\cdot g(\ell)=\beta\cdot \psi_0(g(\ell))$, which implies that $g(\ell)$ is also contracted by $\psi_0$ for every $g\in G$. 
By the rigidity lemma (cf.~\cite[Lemma 4.1.13]{BS95}),  $G$ descends to $Z$ holomorphically, i.e., $\psi_0$ is $G$-equivariant. 

\setlength\parskip{3pt}\par \vskip 0.3pc 
\noindent
\textbf{Step 2.} Suppose in the following that there is a $(K_X+D+\xi)$-trivial extremal ray $R$, which is $\xi$-trivial but $(K_X+D+F)$-negative. 
Then it follows from \cite[Theorem 1.3]{CHP16} that such $R=\mathbb{R}_{\ge0}[C]$ is generated by a  curve $C$ with $-(K_X+D+F)\cdot C\le 4$. 
Since $\xi=\eta+F$, we have $\eta\cdot C\le 4$ and thus  there are only finitely many extremal rays $R$ such that $(K_X+D+\xi)\cdot R=0$, $\xi\cdot R=0$ but $(K_X+D+F)\cdot R<0$ (cf.~Proof of Proposition \ref{pro-nef-big-finite}). 
Since the support of $D+F$ is $G$-periodic, with $G$ replaced by a finite-index subgroup, the $G$-image of $R$ is also $(K_X+D+\xi)$-trivial, $\xi$-trivial but $(K_X+D+F)$-negative. 
With $G$ replaced by a finite-index subgroup again, $G$ fixes all of such extremal rays and  the contraction $\phi: X\to Y$ of  $R$ is thus $G$-equivariant (the existence of $\phi$ is due to \cite[Theorems 1.5 and 2.18]{DH22}; cf.~\cite[Section 10]{DO22}).
Furthermore, since $R$ is $\xi$-trivial, each $\xi_i=\phi^*\xi_{i,Y}$ for some nef $(1,1)$-class $\xi_{i,Y}$ on $Y$.\setlength\parskip{0pt} 

If $\phi$ is  divisorial, then  set $X_1:=Y$, $\xi_{X_1}:=\sum \xi_{i,Y}$, $\eta_1:=\phi_*\eta$, $F_1:=\phi_*F$ and $D_1:=\phi_*D$.
If $\phi$ is  small  with the flipped contraction $\phi^+:X^+\to Y$ still being $G$-equivariant by the choice of $X^+:=\textup{Proj}_Y\oplus_{m\ge 0}\phi_*\mathcal{O}_X(\lfloor m(K_X+D+F)\rfloor)$, then set  $X_1:=X^+$, $\xi_{X_1}:=\sum (\phi^+)^*\xi_{i,Y}$ and set $\eta_1, F_1, D_1$ to be the direct images of $\eta,F,D$ under the flip $X\dashrightarrow X^+$ respectively. 
Then we continue this procedure with the new pair $(X_1,D_1)$, noting that $\eta_1$ is still modified K\"ahler (cf.~\cite[Lemma 2.35]{DH22}) and the pair $(X_1,D_1+F_1,G)$ satisfies \hyperlink{Hypothesis}{Hypothesis}. 
By \cite[Theorem 1.1]{DH22}, this program will terminate in finitely many steps and thus we finally arrived at Step 1.

By the ending part of \cite[Proof of Theorem 6.4]{DH22},
the composite map $\psi:X\dashrightarrow Z$ is holomorphic and $(K_X+D+\xi)$-trivial. 
So  we finish the proof of Claim \ref{claim-equivariancy} by showing the $G$-equivariancy step by step. \qed

\setlength\parskip{6pt}\par \vskip 1pc

Now we come back to proving Proposition \ref{prop-end-pdt}. 
By \cite[Lemma 3.3]{HP16},  every nef $G$-eigenvector  $\xi_i$ coincides with $\psi^*\xi_{Z,i}$ and then we have $\xi=\psi^*(\xi_Z:=\sum\xi_{Z,i})$. 
Since $K_X+D=\psi^*(K_Z+D_Z)$ and $(X,D)$ is klt, the pair $(Z,D_Z:=\psi_*(D))$ is also klt. 
Further, $D_Z$ is also $G$-periodic.
\setlength\parskip{0pt}
\begin{claim}\label{claim-nef-case}
$K_Z+D_Z\sim_{\mathbb{Q}}0$.
\end{claim}
Suppose Claim \ref{claim-nef-case} for the time being.
Then  $\xi_Z\sim_\mathbb{Q} K_Z+D_Z+\xi_Z=\beta$ is a K\"ahler class.
By Lemma \ref{lem-null-periodic}, there is no  positive dimensional $G$-periodic subvariety on $Z$ and hence $D_Z=0$. 
In particular, $K_Z\sim_\mathbb{Q}0$ and Theorem \ref{thm-main} (3) is proved. 

Take an integer $m$ such that $mK_Z\sim 0$. 
Let $\tau:Z':=\textbf{Spec}\,\oplus_{i=0}^{m-1}\mathcal{O}_Z(-iK_Z)\to Z$
be the global index one cover with $K_{Z'}=\tau^*K_Z\sim 0$.
Then $Z'$ has only canonical singularities (cf.~\cite[Proposition 5.20 and Corollary 5.24]{KM98}). 
Clearly, one can lift $G$ to $Z'$ via their actions on $K_Z$ and  $\xi_{Z'}:=\tau^*\xi_Z$  is still a K\"ahler class (cf.~\cite[Propsoition 3.5]{GK20}).
By \cite[Proposition 3.6]{GK20} and Lemma \ref{lem-periodic}, $\widetilde{c_2}(Z')\cdot \xi_{Z'}=0$. 
Therefore, $Z'$ is a $Q$-complex torus (cf.~\cite[Theorem 1.1]{GK20}). 
By the Galois closure trick (cf.~\cite[Lemma 7.4]{GK20}), the quotient  $Z$ is also a $Q$-complex torus. 
So Theorem \ref{thm-main} (1) is proved. 

Let $a:T\to Z$ be the Albanese closure (cf.~\cite[Definition 2.5]{Zho22}) such that the group $G$ (on $Z$) lifts to $G_T$ (on $T$) holomorphically.
Let $H:=\textup{Gal}(T/Z)$ be the galois group. 
Since each $g_T\in G_T$ normalizes $H$, the composite $a\circ g_T$ is $H$-invariant.
By the universality of the quotient morphism $a$ over the \'etale locus, $a\circ g_T$ factors through $a$.
Hence, the Albanese closure $a$ is $G$-equivariant with $G\cong G_T/H$.
In the view of Lemma \ref{lem-null-periodic}, the singular locus $\textup{Sing}\,Z$ consists of finitely many isolated points, noting that $\textup{Sing}\,Z$ is  $G$-periodic and $\xi_Z$ is K\"ahler (cf.~Lemma  \ref{lem-periodic}). 
So the group $H$ acts freely  outside finitely many isolated points by the purity of branch loci and  
 Theorem \ref{thm-main} (2) is proved.

\setlength\parskip{4pt}\par \vskip 0.3pc

\noindent
\textbf{Proof of Claim \ref{claim-nef-case} (End of Proof of Proposition \ref{prop-end-pdt}):} 
Since $Z$ is non-uniruled by our assumption in the beginning of the proof, $K_Z$ is pseudo-effective (cf.~\cite{Bru06}). 
Let $m\in\mathbb{N}$ be the Cartier index of $K_Z+D_Z$ and take a $G$-equivariant resolution $\sigma:\widetilde{Z}\to Z$.
Then the null locus of $\xi_{\widetilde{Z}}:=\sigma^*\xi_Z$, which is the union of positive dimensional $G$-periodic proper subvarieties of $\widetilde{Z}$ (cf.~Lemma \ref{lem-null-periodic}),  coincides with the non-K\"ahler locus of $\xi_{\widetilde{Z}}$  (cf.~\cite[Theorem 1.1]{CT15}). 
By Lemma \ref{lem-periodic},  $\sigma^*(K_Z+D_Z)\cdot\xi_{\widetilde{Z}}^2=0$.
\setlength\parskip{0pt}

We show that, for every $G$-periodic subvariety $B\subseteq\widetilde{Z}$, the restriction $\xi_{\widetilde{Z}}|_{B}=0$. 
By Lemma \ref{lem-periodic}, we may assume $\dim B=\dim\sigma(B)=2$, for otherwise, the triviality of $\xi_{\widetilde{Z}}|_B$ follows from that of $\xi_Z|_{\sigma(B)}$, noting that $\dim \sigma(B)\le 1$ and $\sigma(B)$ is also $G$-periodic.
Then, 
$\sigma^*(K_Z+D_Z+\xi_Z)|_B$ is nef and big.
Since both $\sigma^*(K_Z+D_Z)$ and $B$ are $G$-periodic, applying Lemma \ref{lem-periodic} once more, we have 
$$\sigma^*(K_Z+D_Z+\xi_Z)|_B\cdot \xi_{\widetilde{Z}}|_B=\sigma^*(K_Z+D_Z+\xi_Z)\cdot \xi_{\widetilde{Z}}\cdot B=0.$$
Thus, $(\xi_{\widetilde{Z}}|_B)^2\le 0$ and hence 
$(\xi_{\widetilde{Z}}|_B)^2=0$ by the nefness of  $\xi_{\widetilde{Z}}$. 
Applying  the Hodge index theorem and noting that $(\sigma^*(K_Z+D_Z+\xi_Z)|_B)^2>0$, we finally have $\xi_{\widetilde{Z}}|_B\equiv 0$.

Since $\xi_{\widetilde{Z}}$ is nef and big, by Lemma \ref{lem-reso-null-locus}, there is a bimeromorphic holomorphic map (which is not necessarily $G$-equivariant) 
$\sigma':\widetilde{Z}'\to \widetilde{Z}$ such that  $\sigma'^*\xi_{\widetilde{Z}}=\omega+E$ where $\omega$ is some K\"ahler class and $E$ is an effective $\mathbb{R}$-divisor with the support $\textup{Supp}\,\sigma'(E)=\textup{Null}(\xi_{\widetilde{Z}})$ (which is $G$-periodic).  
Then $(\sigma'^*\xi_{\widetilde{Z}})|_E=(\sigma'|_E)^*(\xi_{\widetilde{Z}}|_{\sigma'(E)})\equiv 0$ by the above paragraph. 
Therefore, 
\begin{equation}\label{eq1}\tag{\dag}
0=(\varphi:=\sigma\circ\sigma')^*(K_Z+D_Z)\cdot
(\varphi^*\xi_Z)^2=\varphi^*(K_Z+D_Z)\cdot(\omega+E)\cdot\omega.
\end{equation}
Furthermore, the nefness of the following class
$$\varphi^*(K_Z+D_Z)|_E=(\sigma'|_E)^*(\sigma^*(K_Z+D_Z)|_{\sigma'(E)})=(\sigma'|_E)^*(\sigma^*(K_Z+D_Z+\xi_Z)|_{\sigma'(E)}),$$ 
implies that $\varphi^*(K_Z+D_Z)\cdot E\cdot\omega\ge 0$.
Together with Equation (\ref{eq1}),  $\varphi^*(K_Z+D_Z)\cdot\omega^2=0$. 
Since $K_Z+D_Z$ is pseudo-effective by noting that $Z$ is non-uniruled (cf.~\cite{Bru06}), 
we have $\varphi^*(K_Z+D_Z)\equiv 0$ as classes. 
As $\dim Z=3$, we have $K_Z+D_Z\equiv 0$ (cf.~\cite[Proposition 3.14]{HP16}). 
So the abundance for numerically trivial pairs gives us  $K_Z+D_Z\sim_{\mathbb{Q}}0$ (cf.~\cite[Corollary 1.18]{CGP20} or \cite[Theorem 1.1]{DO22}).  
We finish the proof of Claim \ref{claim-nef-case}.
\end{proof}

\begin{proof}[Proof of Theorem \ref{thm-main}]
In the view of \cite[Theorem 1.3 (1)]{Zho22} and its proof, we may assume that $X$ is not uniruled. 
In the following, let us run the $G$-equivariant minimal model program for the pair $(X,D+\xi)$ (cf.~\hyperlink{Hypothesis}{Hypothesis} and Notation \ref{not-xi}).
If $K_X+D+\xi$ is already nef, then we are done by Proposition \ref{prop-end-pdt}.

We consider the case when $K_X+D+\xi$ is not nef. 
Then there is a $(K_X+D+\xi)$-negative extremal ray $R$.
By Proposition \ref{pro-nef-big-finite}, such  $(K_X+D+\xi)$-negative extremal rays are finitely many. 
Therefore, with $\xi$ replaced by a multiple, we may assume that all of such extremal rays are $\xi$-trivial (and hence $\xi_i$-trivial for $i=1,2,3$). 
If $R=\mathbb{R}_{\ge 0}[\ell]$ is one of such ray with  $(K_X+D+\xi)\cdot\ell<0$ and $\xi\cdot\ell=0$, then $\xi_i\cdot g(\ell)=0$ for each $i$ and thus $\xi\cdot g(\ell)=0$.
So $(K_X+D+\xi)\cdot g(\ell)<0$ and  $g_*R$ is also one of such  $(K_X+D+\xi)$-negative extremal rays. 
With $G$ replaced by a finite-index subgroup, we may assume that $R$ is $G$-stable. 
Let $\phi: X\to Y$ be the contraction of $R$ (cf.~\cite[Theorems 1.5 and 2.18]{DH22}, \cite[Section 10]{DO22}). 
Then $G$ descends to a biregular action on $Y$ and each $\xi_i=\phi^*\xi_{Y,i}$ for some nef common $G|_Y$-eigenvector $\xi_{Y,i}$ (cf.~\cite[Lemma 3.3]{HP16}).

If $\phi$ is  divisorial, with $(X,D,G)$ replaced by $(Y,D_Y:=\phi(D),G|_Y)$, we come back  and continue this program, noting that  $(Y,D_Y:=\phi(D),G|_Y)$ satisfies \hyperlink{Hypothesis}{Hypothesis}; see \cite[Theorem  1.1]{DH22}. 
If $\phi$ is  small, then  \cite[Theorem 4.3]{CHP16} confirms the existence of the flip $\phi^+:X^+\to Y$. 
Indeed, $X=\textup{Proj}_Y\oplus_{m\ge 0}\phi_*\mathcal{O}_X(\lfloor-m(K_X+D_X)\rfloor)$ and $X^+=\textup{Proj}_Y\oplus_{m\ge 0}\phi_*\mathcal{O}_X(\lfloor m(K_X+D_X)\rfloor)$.
Clearly, $G$ descends to $Y$ and  lifts to $X^+$ holomorphically.
Then with $(X,D,G)$ replaced by $(X^+,D^+:=(\phi^+)^{-1}(\phi(D)), G|_{X^+})$, we come back  and continue this program, noting that we pull back $\xi_{Y,i}$ to $X^+$ to get new nef  common $G|_{X^+}$-eigenvectors. 
Similarly, $(X^+,D^+:=(\phi^+)^{-1}(\phi(D)), G|_{X^+})$ satisfies \hyperlink{Hypothesis}{Hypothesis}; see~\cite[Theorem 1.1]{DH22}.

By \cite[Theorem 1.1]{DH22}, such ($G$-equivariant) log minimal model program will terminate after finitely many steps.
So we finally arrive at the model $X_n$ with  nef $K_{X_n}+D_n+\xi_n$ and  get the fibration $X_n\to Z$ by the base point free theorem (cf.~\cite[Theorem 1.7]{DH22}).
By Proposition \ref{prop-end-pdt}, we are remained to prove that, the $G$-equivariant bimeromorphic composite map $X\dashrightarrow Z$ is indeed holomorphic.
Suppose to the contrary that  $X\dashrightarrow Z$ is not holomorphic.
Then there exists some flip $X_i\dashrightarrow X_{i+1}:=X_i^+$ over $Y$ in the $G$-MMP, such that $Y\dashrightarrow Z$ is not holomorphic.
Let $\phi_i:X_i\to Y$  be the corresponding flipping  contraction of the extremal ray $R_i$ (with the flipped contraction $\phi_i^+:X_{i+1}\to Y$).
By the rigidity lemma (cf.~\cite[Lemma 4.1.13]{BS95}), there is some curve $C\in R_i$ such that $(\phi_i^+)^{-1}(\phi_i(C))$ is not contracted by $X_i^+\dashrightarrow Z$. 
But then, the image of $(\phi_i^+)^{-1}(\phi_i(C))$ on $Z$ is a $G$-periodic curve, contradicting the first assertion of Theorem \ref{thm-main} (3). 
So we finish the proof of Theorem \ref{thm-main}.
\end{proof}

\end{document}